\newcolumntype{N}{@{}m{0pt}@{}} % Invisible column to vertically center other columns
\def\lstbasicfont{\fontfamily{pcr}\selectfont\footnotesize}
\lstdefinelanguage{matlab}{
    comment=[l]\%,
    morecomment=[l]...,
    morecomment=[s]{\%\{}{\%\}},
}[comments]
\newcommand{\textbrut}{\lstinline[basicstyle={\ttfamily},breaklines=true]} % for nicely cut url
\colorlet{mylinkcolor}{red!80!black}
\colorlet{myurlcolor}{green!50!black}
\colorlet{mysectioncolor}{blue!50!black}
\title{\LARGE\bf Path-complete $p$-dominant switching linear systems}
\author{Guillaume~O.~Berger, Fulvio~Forni and Rapha\"{e}l~M.~Jungers%
\thanks{G.~Berger is a FNRS/FRIA Fellow.
\textbrut{guillaume.berger@uclouvain.be}.
R.~Jungers is a FNRS Research Associate.
He is supported by the French Community of Belgium, the Walloon Region and the Innoviris Foundation.
\textbrut{raphael.jungers@uclouvain.be}.
Both are with ICTEAM institute, UCLouvain, Louvain-la-Neuve, Belgium.
F.~Forni is with the Department of Engineering, University of Cambridge, Trumpington Street, Cambridge CB2 1PZ, UK.
\textbrut{f.forni@eng.cam.ac.uk}.}}
\newtheorem{theorem}{Theorem}
\newtheorem{proposition}[theorem]{Proposition}
\newtheorem{remark}{Remark}
\newtheorem{example}{Example}
\newtheorem{definition}{Definition}
\newtheorem{problem}{Problem}
\newcommand{\RRb}{\mathbb{R}}
\newcommand{\ntn}{{n\times n}}
\newcommand{\In}{\mathrm{In}}
\newcommand{\saufzero}{\setminus\{0\}}
\newcommand{\nint}{\mathrm{int}}
\newcommand{\calK}{\mathcal{K}}
\newcommand{\ZZb}{\mathbb{Z}}
\newcommand{\calH}{\mathcal{H}}
\newcommand{\calV}{\mathcal{V}}
\newcommand{\calS}{\mathcal{S}}
\newcommand{\calU}{\mathcal{U}}
\newcommand{\dime}{\mathrm{dim}}
\newcommand{\MyTransition}[3]{\ensuremath{#1\,\raisebox{-2pt}[0pt][0pt]{$\stackrel{#2}{\longrightarrow}$}\,#3}}
\newcommand{\MyTransitionNum}[3]{\ensuremath{#1\,\raisebox{-2pt}[0pt][0pt]{$\stackrel{\raisebox{-2pt}[0pt][0pt]{\scriptsize$#2$}}{\longrightarrow}$}\,#3}}
\newcommand{\ttAut}{\mathtt{Aut}}
\newcommand{\calL}{\mathcal{L}}
\newcommand{\tta}{\mathtt{a}}
\newcommand{\ttb}{\mathtt{b}}
\newcommand{\ttc}{\mathtt{c}}
\newcommand{\ttd}{\mathtt{d}}
\newcommand{\tte}{\mathtt{e}}
\newcommand{\setMatrix}{\small\arraycolsep=0.5\arraycolsep}
\newcommand{\PrintSwicth}{x(t+1) = A_{\sigma(t)}\, x(t)}
\newcommand{\Hhat}{\bar{H}}
\newcommand{\Vhat}{\bar{V}}
\newcommand{\calKP}[1]{\calK(P_{#1})}
\begin{document}

\maketitle
\thispagestyle{empty}
\pagestyle{empty}

%%%%%%%%%%%%%%%%%%%%%%%%%%%%%%%%%%%%%%%%%%%%%%%%%%%%%%%%%%%%%%%%%%%%%%%%%%%%%%%%
\begin{abstract}
The notion of path-complete p-dominance for switching linear systems (in short, path-dominance) is introduced as a way to generalize the notion of dominant/slow modes for LTI systems.
Path-dominance is characterized by the contraction property of a set of quadratic cones in the state space.
We show that path-dominant systems have a low-dimensional dominant behavior, and hence allow for a simplified analysis of their dynamics.
An algorithm for deciding the path-dominance of a given system is presented.
\end{abstract}

%%%%%%%%%%%%%%%%%%%%%%%%%%%%%%%%%%%%%%%%%%%%%%%%%%%%%%%%%%%%%%%%%%%%%%%%%%%%%%%%
\section{INTRODUCTION}
\label{sec-introduction}

Lyapunov methods are ubiquitous in system analysis.
The decay of a positive Lyapunov function along system trajectories guarantees their asymptotic convergence to the fixed point at the minimum of the Lyapunov function.
By Lyapunov methods, linear stability analysis reduces to the feasibility of a few linear matrix inequalities (LMIs).
The nonlinear case is conceptually similar but requires more general classes of Lyapunov functions.
In this paper, we mimic Lyapunov analysis but replace the contracting ellipsoids of quadratic Lyapunov theory with contracting cones.
The aim is to develop tractable methods for the analysis of multistable and oscillatory systems, typically captured by low-order reduced  dynamics.

In our study, we consider switching linear systems whose switches are regulated by a language.
Restricting switches to a language increases the expressiveness of these systems.
For example, the language can be used to model specific sequences of actions/communication/disturbances in distributed computation algorithms, cooperative dynamics, or in systems with uncertainties.
Switching systems constrained to a language are hard to analyze and control due to the complexity of their dynamics.
But many complex systems have in fact a low-dimensional \emph{dominant} behavior.
The aim of this paper is to provide a computational framework for deciding whether a discrete-time switching linear system is $p$-dominant, i.e., has a $p$-dimensional dominant behavior.

For linear time-invariant (LTI) systems, $p$-dominance is equivalent to the existence of $p$ dominant/slow modes and $n-p$ transient/fast modes of the system.
The inequality $A^\top PA - \gamma^2\, P \prec 0$ captures this feature, where $A\in\mathbb{R}^{n\times n}$ is the system transition matrix, $\gamma$ is the dominance \emph{rate} separating dominant and non-dominant modes, and $P$ is a symmetric matrix with $p$ negative eigenvalues and $n-p$ positive eigenvalues.
Geometrically, using indefinite matrices is a way to replace ellipsoids with quadratic cones, i.e., cones that can be described as the set of points $x$ such that $x^\top Px\leq0$.
The feasibility of the Lyapunov inequality thus reads as the contraction of a quadratic cone.
In fact, the seminal example of $p$-dominance for LTI systems is positivity \cite{bib-Lue79IDST}, \cite{bib-Bus73HMPC}.
By Perron-Frobenius theorem \cite{bib-Bus73HMPC}, \cite{bib-Van68SPMH}, positivity of an LTI system implies convergence of the trajectories to a $1$-dimensional subspace.
$p$-dominant linear systems extend the property to $p$-dimensional attractors \cite{bib-ForSep17DTPD}.
Similar results hold for $p$-dominant nonlinear systems \cite{bib-ForSep17DDTD}.

In this paper, we study dominant switching systems.
As a first step, dominance is extended to \emph{path-dominance}, adapting the property to switching systems constrained by a language.
We then combine ideas from path-complete Lyapunov theory \cite{bib-AngAth17PCGC}, \cite{bib-JunAhm17CLIS}, path-complete positivity \cite{bib-ForJun17PCPS}, and dominance \cite{bib-ForSep17DDTD}, to derive a new set of linear matrix inequalities for path-dominance.
The goal is to increase expressiveness by moving from a uniform cone to a family of cones while preserving the feature of a low-dimensional asymptotic behavior.
The approach requires the introduction of an automaton whose transitions are paired to the admissible switches of the system.
Each state of the automaton $q$ is then paired to a quadratic form $P_q$ of fixed inertia, and a set of LMIs is built from the transitions of the automaton.
The result is a sound algorithm that computes a contracting family of cones.

The paper is organized as follows: Section~\ref{sec-path-dominant-systems} discusses the notion of $p$-dominance for LTI systems and extends the property to switching linear systems.
Path $p$-dominance is defined and discussed in detail, and illustrated by an example.
In Section~\ref{sec-behavior}, we describe the asymptotic behavior of $p$-dominant systems.
The algorithm for path-dominance is presented in Section~\ref{sec-algorithm}.
Conclusions follow.

%%%%%%%%%%%%%%%%%%%%%%%%%%%%%%%%%%%%%%%%%%%%%%%%%%%%%%%%%%%%%%%%%%%%%%
\section{PATH-DOMINANT SYSTEMS}
\label{sec-path-dominant-systems}

%%%%%%%%%%%%%%%%%%%%%%%%%%%%%%%%%%%%%%%%%%%%%%%%%%%%%%%%%%%%%%%%%%%%%%
\subsection{Dominant LTI systems}

A linear time-invariant (LTI) system
\begin{equation}
\label{eq-LTI-sys}
x(t+1) = A x(t) ,\quad x \in \RRb^n ,~ A\in\RRb^\ntn
\end{equation}
is \emph{$p$-dominant} if it has $p$ dominant/slow modes and $n-p$ transient/fast modes.
The dominant modes characterize the asymptotic behavior of the system, typically captured by dynamics of reduced order \cite{bib-ForSep17DDTD}, \cite{bib-ForSep17DTPD}.
To characterize this property, we use symmetric matrices $P\in\RRb^\ntn$ of fixed \emph{inertia} $\In(P)$, given by the triplet $(i_-,i_0,i_+)$ where $i_-$, $i_0$, and $i_+$ are the number of negative, zero, and positive eigenvalues of $P$, respectively.
Given any symmetric matrix $P$ with inertia $(p,0,n-p)$, we will make use of the associated \emph{quadratic $p$-cone}:
\begin{equation}
\label{eq-quad-cone}
\calK(P)=\{x\in\RRb^n\,:\, x^\top Px\leq0\} .
\end{equation}
The degree $p$ stands for the maximal dimension of a linear subspace contained in $\calK(P)$, e.g., the eigenspace associated to the $p$ negative eigenvalues of $P$.

\begin{definition}\label{def-LTI-dominant}
A linear system \eqref{eq-LTI-sys} is \emph{$p$-dominant} if there exists a quadratic $p$-cone $\calK(P)$ such that
\begin{equation}
\label{eq-contra-single}
A\big(\calK(P)\saufzero\big) \subseteq \nint\, \calK(P)
\end{equation}
where $\nint\, \calK(P)$ denotes the interior of $\calK(P)$.\hfill$\lrcorner$
\end{definition}

When \eqref{eq-contra-single} holds, we say that $\calK(P)$ is \emph{contracted} by $A$.
This geometric property has an algebraic interpretation, based on linear matrix inequalities (LMIs):

\begin{proposition}\label{pro-LMI-S-proc}
Let $A\in\RRb^\ntn$ and consider a $p$-cone $\calK(P)$.
$\calK(P)$ is contracted by $A$ if and only if there exists a \emph{rate} $\gamma>0$ and an $\varepsilon>0$ such that
\begin{equation}
\label{eq-LMI-single}
A^\top PA - \gamma^2\, P \preceq -\varepsilon I.
\end{equation}
\vskip-10pt
\hfill$\lrcorner$
\end{proposition}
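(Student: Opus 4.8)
The plan is to read both conditions as pointwise quadratic inequalities on $\RRb^n\saufzero$ and to bridge them with the S-procedure. First I would record a normalization: for a symmetric matrix $M$ one has $M\preceq-\varepsilon I$ for some $\varepsilon>0$ if and only if $M\prec0$; hence \eqref{eq-LMI-single} is equivalent to the strict inequality $A^\top PA-\gamma^2P\prec0$, i.e.\ to $x^\top(A^\top PA)x<\gamma^2\,x^\top Px$ for every $x\neq0$. On the other side, since for a $p$-cone with $1\le p\le n-1$ one has $\nint\,\calK(P)=\{x: x^\top Px<0\}$, the contraction \eqref{eq-contra-single} reads: for all $x\neq0$, $x^\top Px\le0$ implies $(Ax)^\top P(Ax)<0$. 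With both conditions in this form the implication ``$\Leftarrow$'' is immediate: assuming the strict LMI and taking $x\neq0$ in $\calK(P)$, we get $(Ax)^\top P(Ax)<\gamma^2\,x^\top Px\le0$, so $Ax\in\nint\,\calK(P)$. (The degenerate cases $p=0$, where $\calK(P)=\{0\}$, and $p=n$, where $\calK(P)=\RRb^n$, are checked directly.)

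For the converse ``$\Rightarrow$'' I would invoke the (lossless) S-procedure for two quadratic forms. The contraction states precisely that the form $q(x)=x^\top(A^\top PA)x$ is negative on $\{x\neq0:x^\top Px\le0\}$. Because $\In(P)=(p,0,n-p)$ with $p\ge1$, there exists $\xxb$ with $\xxb^\top P\xxb<0$, which supplies the Slater-type regularity condition under which the S-procedure is lossless. It then returns a multiplier $\lambda\ge0$ such that $q(x)-\lambda\,x^\top Px<0$ for all $x\neq0$, that is, $A^\top PA-\lambda P\prec0$.

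It remains to upgrade $\lambda$ to a genuine \emph{rate}. If $\lambda=0$ then $A^\top PA\prec0$, which is impossible for $0\le p\le n-1$: if $A$ is singular then $x^\top A^\top PAx=0$ on $\ker A\saufzero$, and if $A$ is invertible then $A^\top PA$ is congruent to $P$, so by Sylvester's law of inertia it inherits the $n-p\ge1$ positive eigenvalues of $P$; either way $A^\top PA\not\prec0$. Hence $\lambda>0$, and setting $\gamma=\sqrt\lambda$ and $\varepsilon=-\lambda_{\max}(A^\top PA-\gamma^2P)>0$ recovers \eqref{eq-LMI-single}. I expect the forward direction to be the only real obstacle: converting the geometric implication into a \emph{single} certificate $\gamma$ is exactly the losslessness of the S-procedure, which for two quadratic forms rests on the convexity of their joint numerical range together with the Slater point $\xxb$ above; without this regularity the quadratic implication need not admit such a multiplier.
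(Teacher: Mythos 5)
Your proof is correct and follows essentially the same route as the paper's: the sufficiency direction by the same direct pointwise computation, and the necessity direction by invoking the S-Lemma/S-procedure (the paper cites exactly this tool), with the Slater point $\xxb^\top P\xxb<0$ guaranteed by $p\geq 1$. The only difference is that you spell out details the paper leaves implicit---identifying $\nint\,\calK(P)$ with $\{x : x^\top Px<0\}$, ruling out a zero multiplier $\lambda=0$ via the inertia of $A^\top PA$ so that $\gamma=\sqrt{\lambda}>0$ is a genuine rate, and flagging the degenerate inertias---which strengthens rather than changes the argument.
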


\begin{proof}
[$\eqref{eq-LMI-single}\Rightarrow\eqref{eq-contra-single}$]
Let $x\in\calK(P)\saufzero$.
Then $x^\top Px\leq0$.
If $y=Ax$, we have from \eqref{eq-LMI-single} that $y^\top Py\leq \gamma^2\,x^\top Px-\varepsilon\lvert x\rvert^2<0$.
Hence, $y\in\nint\,\calK(P)$.
[$\eqref{eq-LMI-single}\Leftarrow\eqref{eq-contra-single}$]
\eqref{eq-contra-single} states that, for every $x\in\RRb^n$, $x^\top Px\leq0$, $x\neq0$, implies $x^\top\!A^\top P\,Ax<0$.
Therefrom, we deduce \eqref{eq-LMI-single} by applying the $\calS$-Lemma (see, e.g., \cite[Theorem~4.3.3]{bib-BenNem01LMCO}, \cite[Section~B.2]{bib-BoyVan04CO}).
\end{proof}

$1$-dominance is closely related to the property of positivity \cite{bib-Bus73HMPC}, since it implies the existence of a closed solid convex pointed cone contracted by $A$ or $-A$.
Positive systems have been intensively studied in the past decades.
Their transition matrix $A$ has a single dominant eigenvalue \cite{bib-Van68SPMH} and the corresponding eigenvector generates a 1-dimensional attractive invariant subspace of the system.
This fundamental property has been used in a large number of contexts, for example, for the analysis of Markov chains and compartmental systems \cite{bib-Lue79IDST}, \cite{bib-FarRin11PLST}, or for observer design \cite{bib-BonAst11CODC}, \cite{bib-BacAst08DPLO}.
From the equivalence between \eqref{eq-contra-single} and \eqref{eq-LMI-single}, $1$-dominant LTI systems enjoy a similar property.
Quadratic cones allow for a tractable condition \eqref{eq-LMI-single}, and open the way to generalize the approach of positivity to families of cones that are compatible with $p$-dimensional attractors.
For instance, $p$-dominant linear systems \eqref{eq-LTI-sys} have $p$ dominant eigenvalues
\begin{equation}
\label{eq-eigs-dom}
\lvert \lambda_1\rvert \geq \ldots \geq \lvert \lambda_p\rvert > \lvert \lambda_{p+1}\rvert \geq \ldots \geq \lvert \lambda_n\rvert
\end{equation}
where $\lambda_i$ are the eigenvalues of $A$.
The eigenspace associated to the $p$ dominant eigenvalues of $A$ is a $p$-dimensional attractor of the system.
(We do not provide any proof for this claim here but the reader will notice that \eqref{eq-eigs-dom} is a consequence of Theorem~\ref{thm-lyap} in Section~\ref{sec-algorithm}.)

%%%%%%%%%%%%%%%%%%%%%%%%%%%%%%%%%%%%%%%%%%%%%%%%%%%%%%%%%%%%%%%%%%%%%%
\subsection{Path-dominant switching linear systems}

We extend $p$-dominance to study switching linear systems.
\emph{Switching linear systems} are linear time-dependent systems $x(t+1)=A(t)x(t)$ where the matrix $A(t)$ belongs to some \emph{finite} subset $\{A_1,\ldots,A_N\}\subseteq\RRb^{\ntn}$.
The set $\{A_1,\ldots,A_N\}$ is sometimes called the set of \emph{operating modes} of the system.
The system is constrained if the choice of $A(t)$ is constrained by the past choices of $A(s)$, $s<t$.

More formally, a switching linear system is a system of the form
\begin{equation}
\label{eq-sw-syst}
\PrintSwicth
\end{equation}
where,  $\sigma(t)\in\Sigma=\{1,\ldots,N\}$ and $A_{\sigma(t)}\in\RRb^\ntn$, for all $t$.
A function $x(\cdot):\ZZb\to\RRb^n$ is called a \emph{trajectory} of the system if there exists a signal $\sigma(\cdot):\ZZb\to\Sigma$, called a \emph{switching signal}, such that \eqref{eq-sw-syst} is satisfied for every $t\in\ZZb$.
\eqref{eq-sw-syst} is a \emph{constrained switching system} if the (bi-infinite) sequences $\{\ldots,\sigma(-1),\sigma(0),\sigma(1),\sigma(2),\ldots\}$ generated by the switching signal $\sigma(\cdot)$ belongs to some \emph{restricted language} $\calL\subsetneq\Sigma^\ZZb$.

A usual way to restrict the space of admissible switching signals is by the use of a \emph{finite-state automaton}.
Formally, a finite-state automaton $\ttAut$ is a triplet $(Q,\Sigma,\delta)$ where $Q$ is the (finite) set of states, $\Sigma=\{1,\ldots,N\}$ is the alphabet, and $\delta\subseteq Q\times\Sigma\times Q$ is the set of admissible transitions.
We will write $\MyTransition{q_1}{\sigma}{q_2}\in\delta$ if $(q_1,\sigma,q_2)\in\delta$.
A (bi-infinite) sequence (also called a \emph{word}) $\{\sigma(t)\}$, $\sigma(t)\in\Sigma$, $t\in\ZZb$, is \emph{admissible} for $\ttAut$ if there exists a (bi-infinite) sequence $\{q_t\}$, $q_t\in Q$, $t\in\ZZb$, such that $\MyTransition{q_t}{\sigma(t)}{q_{t+1}}\in\delta$ for every $t$.
An automaton $\ttAut$ is \emph{path-complete} for the language $\calL$ if any word that belongs to $\calL$ is admissible for $\ttAut$.
Examples of finite-state automatons are presented in Figure~\ref{fig-finite-state-automaton}.

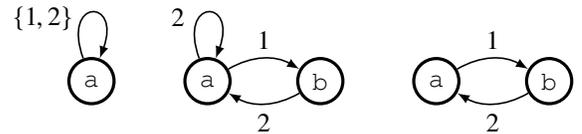
\begin{figure}[htbp]
\centering
\begin{tikzpicture}[baseline=(current bounding box.center)]
\tikzset{
  node distance=1.5cm,
  my arrow/.style={-latex,auto,semithick},
  my state/.style={draw=black,circle,inner sep=0pt,minimum size=6mm,very thick},
}

\node[my state] (A) {$\tta$};
\path (A) edge[my arrow,loop above,looseness=10,out=110,in=70] node[pos=0.15,above left= 2pt and 2pt,inner sep=0pt,outer sep=0pt] {$\{1,2\}$} (A);

\node[my state,right=9mm of A] (A) {$\tta$};
\node[my state,right of=A] (B) {$\ttb$};
\path (A) edge[my arrow,bend left] node[auto] {$1$} (B);
\path (B) edge[my arrow,bend left] node[auto] {$2$} (A);
\path (A) edge[my arrow,loop above,looseness=10,out=110,in=70] node[pos=0.15,above left] {$2$} (A);

\node[my state,right=9mm of B] (A) {$\tta$};
\node[my state,right of=A] (B) {$\ttb$};
\path (A) edge[my arrow,bend left] node[auto] {$1$} (B);
\path (B) edge[my arrow,bend left] node[auto] {$2$} (A);
\end{tikzpicture}
\vskip-2pt
\caption{Three automatons with $\Sigma=\{1,2\}$ and $Q=\{\tta\}$ (for the left-most automaton) or $Q=\{\tta,\ttb\}$ (for the central and right-most automatons).
The automaton on the right accepts every word with a strict alternation of $1$ and $2$.
The automaton in the middle accepts every word that contains no two consecutive $1$'s.
The automaton on the left accepts every word on the alphabet $\{1,2\}$.
Indeed, every word admissible for the right-most automaton is also admissible for the automatons in the middle and on the left.}
\label{fig-finite-state-automaton}
\end{figure}

\emph{Path $p$-dominance} extends $p$-dominance to switching systems by considering a set of quadratic cones $\calKP{q}$ and a contraction property adapted to constrained sequences of switches $\{\ldots,\sigma(-1),\sigma(0),\sigma(1),\sigma(2),\ldots\}$.

\begin{definition}\label{def-path-p-domi}
The switching system \eqref{eq-sw-syst} is \emph{$p$-dominant with respect to the automaton} $\ttAut=(Q,\Sigma,\delta)$ if there exists a set of quadratic $p$-cones $\{\calKP{q}\}$, $q\in Q$, such that
\begin{equation}
\label{eq-contra-path}
A_\sigma\big(\calKP{q_1}\saufzero\big) \subseteq \nint\,\calKP{q_2}
\end{equation}
for every transition $\MyTransition{q_1}{\sigma}{q_2}\in\delta$.

For any given language $\calL$, \eqref{eq-sw-syst} is \emph{path-complete $p$-dominant} if there exists an automaton $\ttAut$ such that $\ttAut$ is path-complete for $\calL$ and \eqref{eq-sw-syst} is $p$-dominant with respect to $\ttAut$.~\hfill$\lrcorner$
\end{definition}

Condition \eqref{eq-contra-path} expresses the property that, for every transition $\MyTransition{q_1}{\sigma}{q_2}\in\delta$, $\calKP{q_1}$ is \emph{contracted} into $\calKP{q_2}$ by $A_\sigma$.
Path-dominance admits an algebraic characterization based on LMIs:

\begin{proposition}\label{pro-LMI-path}
The switching system \eqref{eq-sw-syst} is $p$-dominant with respect to the automaton $\ttAut=(Q,\Sigma,\delta)$ if and only if there exist (i)~a set of symmetric matrices $\{P_q\}$, $P_q\in\RRb^\ntn$, $q\in Q$, with uniform inertia $(p,0,n-p)$, and (ii)~a set of \emph{rates} $\{\gamma_d\}$, $\gamma_d>0$, $d\in\delta$, and an $\varepsilon>0$ such that
\begin{equation}
\label{eq-LMI-path}
A_\sigma^\top P_{q_2}^{}A_\sigma^{} - \gamma_d^2\, P_{q_1}^{} \preceq -\varepsilon I
\end{equation}
for every transition $d=\MyTransition{q_1}{\sigma}{q_2}\in\delta$.~\hfill$\lrcorner$
\end{proposition}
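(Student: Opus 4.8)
The plan is to reduce the whole statement to the single-cone result of Proposition~\ref{pro-LMI-S-proc}, applied one transition at a time. Definition~\ref{def-path-p-domi} already demands that the cones $\calKP{q}$ be quadratic $p$-cones, so part~(i)---the existence of symmetric matrices $\{P_q\}$ of uniform inertia $(p,0,n-p)$---is just the bookkeeping translation between a $p$-cone and the matrix defining it; there is nothing to prove there beyond recalling the definition of $\calK(P)$ in \eqref{eq-quad-cone}. The real content is that, for each transition $d=\MyTransition{q_1}{\sigma}{q_2}\in\delta$, the geometric contraction $A_\sigma(\calKP{q_1}\saufzero)\subseteq\nint\,\calKP{q_2}$ is equivalent to the single LMI $A_\sigma^\top P_{q_2}A_\sigma-\gamma_d^2\,P_{q_1}\preceq-\varepsilon_d I$ for some $\gamma_d>0$, $\varepsilon_d>0$. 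This is exactly Proposition~\ref{pro-LMI-S-proc}, except that the source cone $\calKP{q_1}$ and the target cone $\calKP{q_2}$ are now allowed to differ, so I would rework that proof keeping the two matrices separate.

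For the direction LMI $\Rightarrow$ contraction, I would copy the computation from Proposition~\ref{pro-LMI-S-proc} almost verbatim: take $x\in\calKP{q_1}\saufzero$, so $x^\top P_{q_1}x\leq0$; putting $y=A_\sigma x$ and using the LMI gives $y^\top P_{q_2}y\leq\gamma_d^2\,x^\top P_{q_1}x-\varepsilon_d\lvert x\rvert^2<0$, hence $y\in\nint\,\calKP{q_2}$. This establishes \eqref{eq-contra-path} for the transition $d$. For the converse, the contraction \eqref{eq-contra-path} says precisely that $x^\top P_{q_1}x\leq0$ and $x\neq0$ imply $x^\top A_\sigma^\top P_{q_2}A_\sigma x<0$. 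I would feed this implication into the $\calS$-Lemma with the two quadratic forms $g(x)=x^\top P_{q_1}x$ and $f(x)=x^\top A_\sigma^\top P_{q_2}A_\sigma x$, obtaining a multiplier $\gamma_d^2\geq0$ with $A_\sigma^\top P_{q_2}A_\sigma-\gamma_d^2\,P_{q_1}\prec0$; this strict negative definiteness yields an $\varepsilon_d>0$ with the required $\preceq-\varepsilon_d I$. Once each transition is handled, I would set $\varepsilon=\min_{d\in\delta}\varepsilon_d>0$, which is legitimate because $\delta$ is finite, and keep the per-transition rates $\gamma_d$; this assembles the finitely many transition-wise LMIs into the simultaneous system \eqref{eq-LMI-path}.

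The main obstacle I anticipate is the application of the $\calS$-Lemma in the converse direction, where two points need care. First, the lemma requires a strict-feasibility (Slater) condition on the constraint form, i.e., the existence of some $\bar{x}$ with $\bar{x}^\top P_{q_1}\bar{x}<0$; this is guaranteed because $P_{q_1}$ has inertia $(p,0,n-p)$ with $p\geq1$, so its negative eigenspace supplies such a direction and $\calKP{q_1}$ has nonempty interior. Second, I must verify that the multiplier is genuinely strictly positive: were $\gamma_d^2=0$, the conclusion $A_\sigma^\top P_{q_2}A_\sigma\prec0$ would force $A_\sigma$ to have full rank and $P_{q_2}$ to be negative definite, contradicting the inertia $(p,0,n-p)$ with $p<n$, so $\gamma_d>0$ as claimed. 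I also need the homogeneous, strict form of the implication (nonzero $x$, strict conclusion) to line up with the version of the lemma cited, which is where I would lean on the precise statements in \cite[Theorem~4.3.3]{bib-BenNem01LMCO} and \cite[Section~B.2]{bib-BoyVan04CO} rather than reprove the $\calS$-Lemma. Everything else is routine.
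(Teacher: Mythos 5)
Your proposal is correct and is exactly the route the paper takes: its proof of Proposition~\ref{pro-LMI-path} is literally ``similar to the proof of Proposition~\ref{pro-LMI-S-proc}, adapting the argument to several cones,'' i.e., the transition-wise application of the single-cone equivalence (direct computation for one direction, $\calS$-Lemma for the converse) followed by taking a common $\varepsilon$ over the finitely many transitions. Your additional care with the Slater condition and the strict positivity of the multiplier $\gamma_d$ only makes explicit what the paper leaves implicit.
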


\begin{proof}
Similar to the proof of Proposition~\ref{pro-LMI-S-proc}, adapting the argument to several cones.
\end{proof}

\begin{example}\label{exa-no-common}
Consider the system \eqref{eq-sw-syst} with $\Sigma=\{1,2\}$ and 
\[
A_1 = {\setMatrix\left[\begin{array}{cc} 2 & 0 \\ 0 & 4 \end{array}\right]}
\quad\text{and}\quad
A_2 = {\setMatrix\left[\begin{array}{cc} 1 & 0 \\ 0 & 1/8 \end{array}\right]} .
\]
Assume that the switching signal $\sigma(\cdot):\ZZb\to\Sigma=\{1,2\}$ is constrained to be a strict alternation of $1$ and $2$, represented by the right-most automaton in Figure~\ref{fig-finite-state-automaton}.
We show that the system is path $1$-dominant.

Consider the set of symmetric matrices 
\[
P_\tta = {\setMatrix\left[\begin{array}{cc} -1 & 0 \\ 0 & 8 \end{array}\right]}
\quad\text{and}\quad
P_\ttb = {\setMatrix\left[\begin{array}{cc} -1/2 & 0 \\ 0 & 1/4 \end{array}\right]} ,
\]
both with inertia $(1,0,1)$, and take uniform rates $\gamma_d=1$ for all $d\in\delta$.
\eqref{eq-LMI-path} is satisfied for every transition: in particular, for $d=\MyTransitionNum{\tta}{1}{\ttb}$, we get
\begin{align*}
A_1^\top P_\ttb^{}\,A_1^{} - P_\tta^{} = {\setMatrix\left[\begin{array}{cc} -2 & 0 \\ 0 & 4 \end{array}\right]} - {\setMatrix\left[\begin{array}{cc} -1 & 0 \\ 0 & 8 \end{array}\right]} = {\setMatrix\left[\begin{array}{cc} -1 & 0 \\ 0 & -4 \end{array}\right]} ,
\end{align*}
and for $d=\MyTransitionNum{\ttb}{2}{\tta}$, we get
\[
\renewcommand{\setMatrix}{\small\arraycolsep=0.3\arraycolsep}%
A_2^\top P_\tta^{}\,A_2^{} - P_\ttb^{} = {\setMatrix\left[\begin{array}{cc} -1 & 0 \\ 0 & 1/8 \end{array}\right]} - {\setMatrix\left[\begin{array}{cc} -1/2 & 0 \\ 0 & 1/4 \end{array}\right]} = {\setMatrix\left[\begin{array}{cc} -1/2 & 0 \\ 0 & -1/4 \end{array}\right]} .
\]

The cones $\calKP{\tta}$ and $\calKP{\ttb}$ are depicted in Figure~\ref{fig-path-complete-cones}.
We observe that the system requires $\calKP{\tta} \neq \calKP{\ttb}$.
\eqref{eq-contra-path} would not be feasible with the additional constraint $\calKP{\tta} = \calKP{\ttb}$.~\hfill$\lrcorner$
\end{example}

\begin{figure}[h!]
\centering
\begin{tikzpicture}
\node[draw=none,inner sep=0pt] at (0,0) {\includegraphics[height=4.2cm]{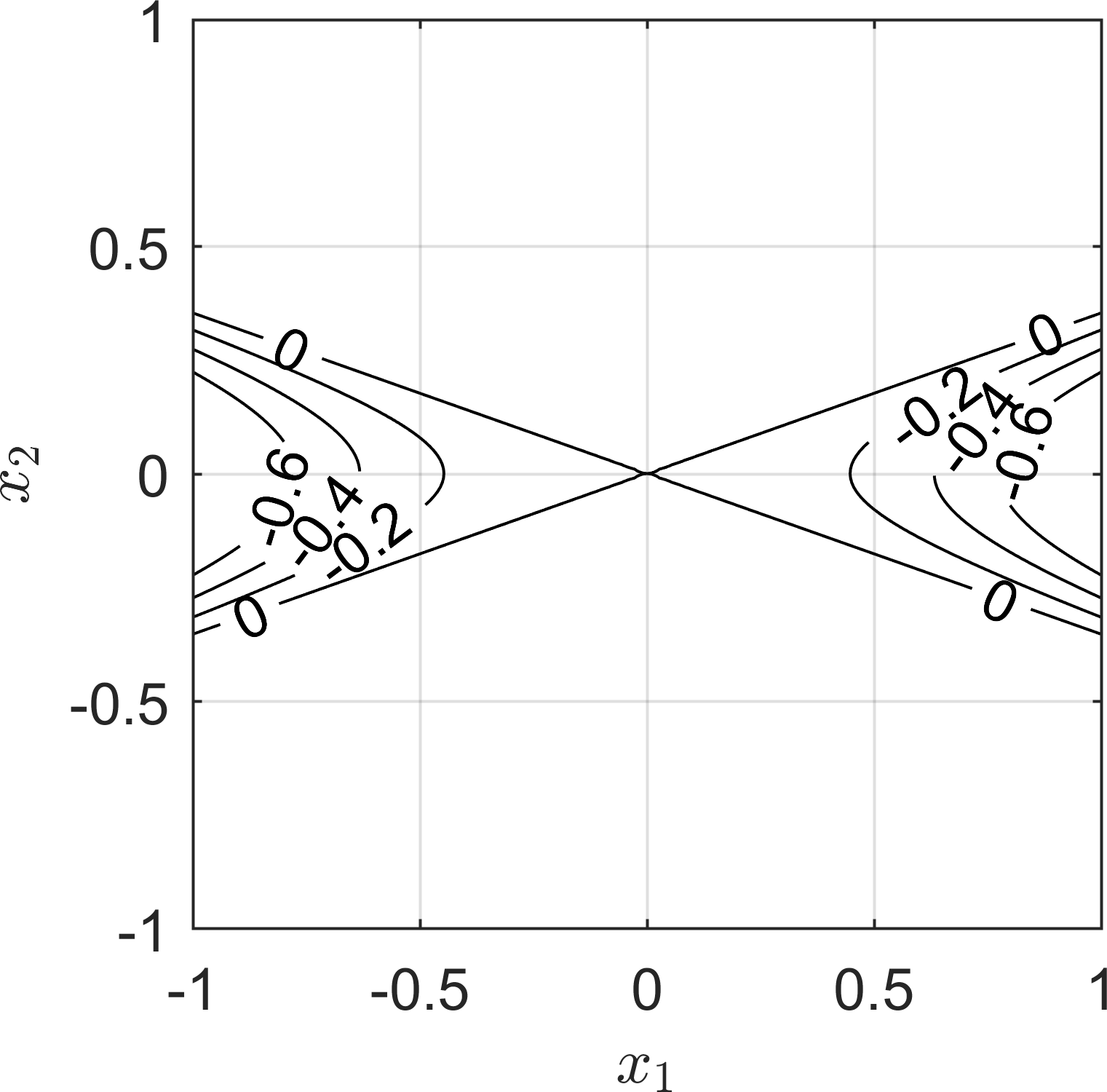}};
\node[draw=none,inner sep=0pt] at (4.3,0) {\includegraphics[height=4.2cm]{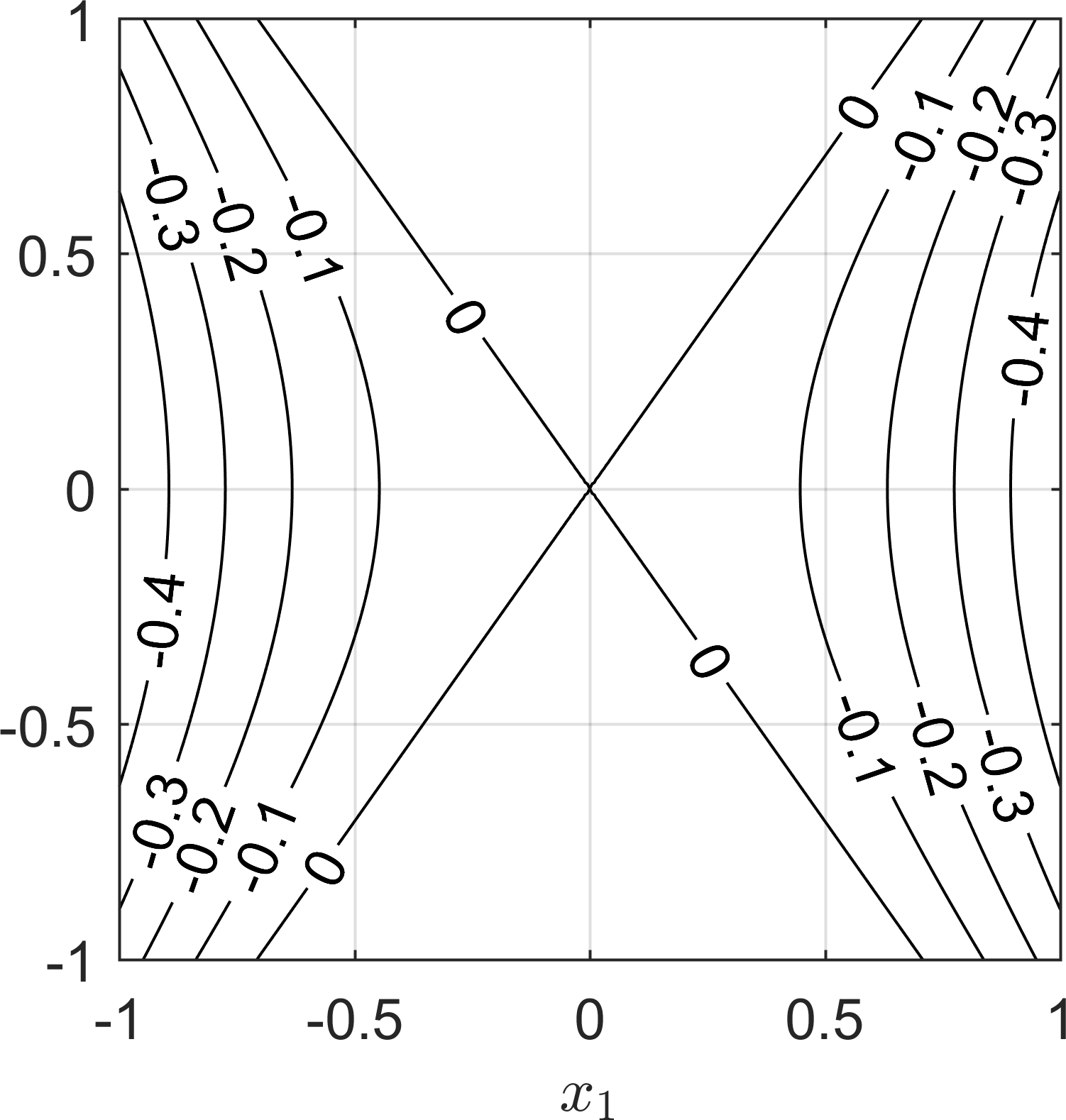}};
\node[fill=none] at (0.4,1.5) {$P_\tta$};
\node[fill=none] at (4.56,1.5) {$P_\ttb$};
\end{tikzpicture}
\vskip-5pt
\caption{Negative level curves of $x^\top P_\tta x$ and $x^\top P_\ttb x$.}
\label{fig-path-complete-cones}
\end{figure}

%%%%%%%%%%%%%%%%%%%%%%%%%%%%%%%%%%%%%%%%%%%%%%%%%%%%%%%%%%%%%%%%%%%%%%
\section{THE ASYMPTOTIC BEHAVIOR OF\\PATH-DOMINANT SYSTEMS}
\label{sec-behavior}

In this section, we study the asymptotic behavior of path $p$-dominant switching systems \eqref{eq-sw-syst} with admissible language $\calL$.
Path dominance guarantees low-dimensional asymptotic behavior.
For each switching sequence $\sigma(\cdot)\in\calL$, there exists a collection of $p$-dimensional linear subspaces $H(t)\subseteq\RRb^n$, $t\in\ZZb$, that are invariant for the system dynamics, that is, each subspace $H(t)$ is mapped by the system into $H(t+1)$.
These subspaces are also globally attractive, as clarified in Theorem~\ref{thm-splitting-sw} below.

The following notation and definitions will simplify the exposition.
For a given $\sigma(\cdot)\in\calL$, and for $s,t\in\ZZb$, $s\leq t$, let $\sigma(s,t)=\{\sigma(s),\sigma(s+1),\ldots,\sigma(t)\}$ be a finite \emph{subsequence} of $\sigma(\cdot)$ and define 
\begin{equation}
A_{\sigma(s,t)}=A_{\sigma(t)}A_{\sigma(t-1)}\cdots A_{\sigma(s+1)}A_{\sigma(s)} .
\end{equation}
Note that if $x(\cdot)$ is a trajectory of the system with switching signal $\sigma(\cdot)$, then $x(t)=A_{\sigma(s,t-1)} x(s)$ for every $t>s$.
Define two \emph{collections of linear subspaces}
\begin{equation}
\begin{split}
\calH &= \{\ldots,H(-1),H(0),H(1),\ldots\}, \\
\calV &= \{\ldots,V(-1),V(0),V(1),\ldots\}.
\end{split}
\end{equation}
We say that $\calH$ and $\calV$ are \emph{paired collections} if $\RRb^n=H(t) \oplus V(t)$ for every $t\in\ZZb$.
A collection $\calH$ (resp.\ $\calV$) is said to be \emph{of dimension $p$} if  $H(t)$ (resp.\ $V(t)$) has dimension $p$ for every $t\in\ZZb$.
$\calH$ (resp.\ $\calV$) is said \emph{forward invariant} for $\sigma(\cdot)$ if $A_{\sigma(t)}H(t)\subseteq H(t+1)$ (resp.\ $A_{\sigma(t)}V(t)\subseteq V(t+1)$) for every $t\in\ZZb$.
Given two paired collections $\calH$ and $\calV$, we denote by $\Hhat(t):\RRb^n\to H(t)$ the projection on $H(t)$ along $V(t)$.
Similarly, $\Vhat(t):\RRb^n\to V(t)$ denotes the projection on $V(t)$ along $H(t)$.

\begin{theorem}[Dominated splitting]\label{thm-splitting-sw}
Given any language $\calL$, let \eqref{eq-sw-syst} be \emph{path $p$-dominant}.
Then for any signal $\sigma(\cdot) \in \calL$,
\begin{itemize}
\item
there exist unique, forward invariant, paired collections $\calH$ and $\calV$ of dimension $p$ and $n-p$, respectively;
\item
there exist $\rho<1$ and $C\geq 1$ such that, for every $s,t\in\ZZb$, $s\leq t$, and every $x\in\RRb^n$,
\begin{equation}
\label{eq-growth-relative}
\frac{\big\lvert\, \Vhat(t) A_{\sigma(s,t-1)} x\,\big\rvert}{\big\lvert\, \Hhat(t) A_{\sigma(s,t-1)} x\,\big\rvert} 
\leq C \rho^{t-s} \frac{\big\lvert\, \Vhat(s) x\,\big\rvert}{\big\lvert\, \Hhat(s) x\,\big\rvert} \,.
\end{equation} 
\end{itemize}
Note that $\calH$ and $\calV$ both depend on $\sigma(\cdot)$.~\hfill$\lrcorner$
\end{theorem}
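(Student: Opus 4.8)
The plan is to fix a signal $\sigma(\cdot)\in\calL$ and exploit path-completeness of $\ttAut$ to select a bi-infinite state sequence $\{q_t\}_{t\in\ZZb}$ with $\MyTransition{q_t}{\sigma(t)}{q_{t+1}}\in\delta$ for all $t$. Writing $P_t=P_{q_t}$, $\calK_t=\calKP{t}$, and $\Phi_{s,t}=A_{\sigma(s,t-1)}$ for the transition operator, Definition~\ref{def-path-p-domi} furnishes the time-varying strict contraction $\Phi_{s,t}(\calK_s\saufzero)\subseteq\nint\,\calK_t$ for every $s<t$. The first routine observation is that these forward images are nested: since $\Phi_{s',t}=\Phi_{s,t}\Phi_{s',s}$ and $\Phi_{s',s}(\calK_{s'}\saufzero)\subseteq\nint\,\calK_s$, one gets $\Phi_{s',t}(\calK_{s'})\subseteq\Phi_{s,t}(\calK_s)$ whenever $s'<s<t$. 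This turns the construction of $\calH$ into a decreasing-intersection problem.

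Next I would define $H(t)=\bigcap_{s\le t}\Phi_{s,t}(\calK_s)$, a nested intersection of cones contained in $\calK_t$, and argue that it is exactly a $p$-dimensional linear subspace. Each $\calK_s$ contains a $p$-dimensional $P_s$-negative-definite subspace on which $\Phi_{s,t}$ is injective, since any $P_s$-negative vector is sent by \eqref{eq-LMI-path} into the interior of $\calK_t$, hence to a nonzero vector; thus every forward image contains a $p$-plane and $\dime H(t)\ge p$. That the intersection is not larger, and is genuinely a subspace, follows once the aperture of $\Phi_{s,t}(\calK_s)$ is shown to collapse as $s\to-\infty$. For the paired collection I would use the complementary quadratic $(n-p)$-cone $\calK(-P_t)=\{x:x^\top P_t x\ge0\}$: contraction of $\{\calK_t\}$ forward is equivalent to contraction of $\{\calK(-P_t)\}$ under the backward dynamics, so the analogous decreasing intersection $V(t)=\bigcap_{u\ge t}\Phi_{t,u}^{-1}\!\big(\calK(-P_u)\big)$ yields an $(n-p)$-dimensional subspace (this is where invertibility of the $A_\sigma$ on the relevant directions is invoked; any kernel direction is $P$-positive and is absorbed into $\calV$). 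The pairing $\RRb^n=H(t)\oplus V(t)$ then follows because $H(t)\saufzero$ is $P_t$-negative and $V(t)\saufzero$ is $P_t$-positive, so $H(t)\cap V(t)=\{0\}$, while $\dime H(t)+\dime V(t)=n$ by the uniform inertia $(p,0,n-p)$. Forward invariance $A_{\sigma(t)}H(t)=H(t+1)$ and $A_{\sigma(t)}V(t)\subseteq V(t+1)$ is read off from the index shift $\Phi_{s,t}=\Phi_{s+1,t}A_{\sigma(s)}$ and the two definitions.

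For the quantitative estimate \eqref{eq-growth-relative} I would first note that forward invariance makes the projections commute with the dynamics, $\Hhat(t)\Phi_{s,t}=\Phi_{s,t}\Hhat(s)$ and $\Vhat(t)\Phi_{s,t}=\Phi_{s,t}\Vhat(s)$, so the left-hand ratio equals $\lvert\Phi_{s,t}\Vhat(s)x\rvert/\lvert\Phi_{s,t}\Hhat(s)x\rvert$. The claim then reduces to the domination bound $\lVert\Phi_{s,t}|_{V(s)}\rVert\cdot\lVert(\Phi_{s,t}|_{H(s)})^{-1}\rVert\le C\rho^{t-s}$, i.e.\ the expansion along $\calH$ uniformly dominates that along $\calV$. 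I would prove this one transition at a time from \eqref{eq-LMI-path}: on the $P$-negative bundle $\calH$ the quadratic form is driven down by a definite factor while on the $P$-positive bundle $\calV$ it cannot grow faster, and the $-\varepsilon I$ slack produces a uniform multiplicative gap per step; composing over $[s,t]$ and folding the finitely many transition constants into $C$ and $\rho<1$ gives the estimate. Uniqueness of $\calH$ and $\calV$ follows from the same contraction: any forward-invariant $p$-plane family is trapped in the cones and, by the aperture collapse, must coincide with $\calH$, and dually for $\calV$.

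The main obstacle is precisely the upgrade from the pointwise strict inclusion $\Phi_{s,t}(\calK_s\saufzero)\subseteq\nint\,\calK_t$ to a \emph{uniform exponential} contraction of the cone aperture, because this single fact simultaneously forces $\dime H(t)=p$, the transversality $\RRb^n=H(t)\oplus V(t)$, and the rate $\rho$. The leverage is finiteness: there are only finitely many matrices $\{A_1,\dots,A_N\}$ and finitely many forms $P_q$, so the strict inequalities in \eqref{eq-LMI-path} hold with a common margin $\varepsilon>0$ and uniform bounds on the $\gamma_d$ and on $\lVert A_\sigma\rVert$. This compactness converts the open inclusions into a uniform one-step contraction constant, whence the geometric rate and the collapse of the cones follow by iteration, uniformly over the choice of $\sigma(\cdot)\in\calL$.
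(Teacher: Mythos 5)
Your proposal is correct in strategy and follows essentially the same route as the paper: the paper itself omits the detailed proof, deferring to the cone-field/dominated-splitting arguments of \cite[Theorem~1]{bib-ForSep17DDTD} and \cite{bib-New04CFDH}, and your construction---$H(t)$ as a nested intersection of forward images of the cones $\calKP{q_s}$, $V(t)$ from backward preimages of the complementary cones $\calK(-P_{q_u})$, with uniformity extracted from the finiteness of $\{A_\sigma\}$, $\{P_q\}$, $\{\gamma_d\}$ and the $-\varepsilon I$ slack in \eqref{eq-LMI-path}---is precisely that argument. The key points you identify (kernel directions are necessarily $P$-positive, the rates $\gamma_d$ cancel in the relative growth estimate, and the uniform aperture collapse is the technical core) are the right ones, so your sketch is consistent with the proof the paper points to.
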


The details of the proof of Theorem~\ref{thm-splitting-sw} are left to an extended version of this paper but the proof closely follows the arguments in \cite[Theorem~1]{bib-ForSep17DDTD} and \cite{bib-New04CFDH}.

The interpretation of Theorem~\ref{thm-splitting-sw} is that, for almost every trajectories $x(\cdot)$ of the system \eqref{eq-sw-syst}, the component $x_v(t) = \Vhat(t) x(t)$ becomes negligible compared to the component $x_h(t) = \Hhat(t)x(t)$.
$p$-dominance does not tell anything about the absolute growth of $x_v(t)$ and $x_h(t)$.
However, if $x(t)$ is uniformly bounded, then $x_v(\cdot)$ converges asymptotically to zero.
Furthermore, if $p$-dominance is established by \eqref{eq-LMI-path} with rates $\gamma_d\leq1$, this means that every trajectories $x(t)$ asymptotically converge to the invariant collection $H(t)$.

\begin{remark}
If $\sigma(\cdot)$ is periodic, we can show that $\calH$ necessarily contains only a finite number of different subspaces, which define a \emph{periodic attractor} for the system.
See also Example~\ref{exa-path-no-common} below.~\hfill$\lrcorner$
\end{remark}

\begin{remark}
For simplicity, our results rely on bi-infinite switching sequences, defined at every time $t$ from $-\infty$ to $\infty$.
This is classical in dynamical systems in order to describe the steady-state behavior.
But all the results can readily be interpreted for trajectories starting at time $t=0$.
For instance, take a $p$-dominant system \eqref{eq-sw-syst} with respect to the automaton $\ttAut=(Q,\Sigma,\delta)$, and let $\sigma(\cdot)$ be admissible for $\ttAut$.
Then any trajectory $x(\cdot)$ admits a decomposition into $x(t) = x_h(t) + x_v(t)$ where $x_h(t) \in \calK(P_{q_t})$, $x_v(t) \in V(t)$, and both trajectories $x_h(t+1) = A_{\sigma(t)} x_h(t)$, $x_v(t+1) = A_{\sigma(t)} x_v(t)$ satisfy $\lim_{t\to\infty} \frac{|x_v(t)|}{|x_h(t)|} = 0$.~\hfill$\lrcorner$
\end{remark}

The projective contraction property \eqref{eq-growth-relative} is illustrated by the normalized trajectories in Figure~\ref{fig-plane-contraction}, which shows the behavior of a path $2$-dominant system whose trajectories are radially scaled to the unit sphere.
The $2$-dominant behavior of the system is captured by the convergence to a two-dimensional plane.

Figure~\ref{fig-plane-contraction} illustrates the key difference between stability and dominance.
The trajectories of a stable switching system all converge to a unique equilibrium.
A $p$-dominant system allows for richer behaviors.
Its trajectories all converge to a $p$-dimensional hyperplane.

\begin{figure}[h!]
\centering
\includegraphics[width=\linewidth]{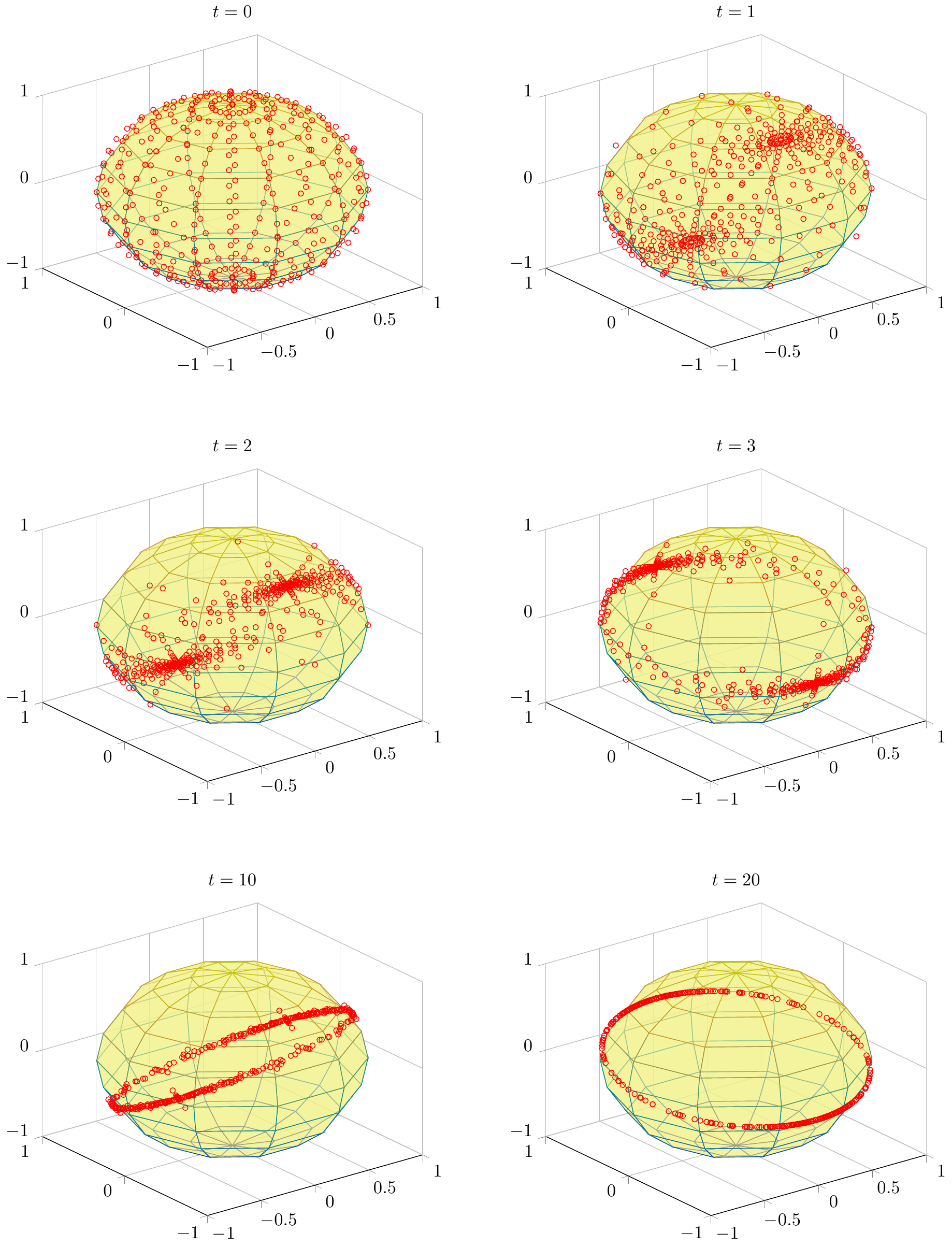}
\vskip-5pt
\caption{Trajectories of a $2$-dominant switching system from different initial conditions and for a fixed signal $\sigma(\cdot)$.
Each red dot represents the projection on the sphere of a trajectory $x(\cdot)$ at different times $t$.}
\label{fig-plane-contraction}
\end{figure}

%%%%%%%%%%%%%%%%%%%%%%%%%%%%%%%%%%%%%%%%%%%%%%%%%%%%%%%%%%%%%%%%%%%%%%
\section{ALGORITHM}
\label{sec-algorithm}

It turns out that path-dominance can be tested algorithmically.
Consider \eqref{eq-sw-syst} and let $\ttAut=(Q,\Sigma,\delta)$ be a path-complete automaton for the language of \eqref{eq-sw-syst}.
By Proposition~\ref{pro-LMI-path}, given a set of rates $\{\gamma_d\}$, $\gamma_d>0$, $d\in\delta$, checking $p$-dominance of \eqref{eq-sw-syst} requires to solve the following problem:

\begin{problem}\label{pro-find-Pq}
Find symmetric matrices $\{P_q\}$, $q\in Q$, with inertia $(p,0,n-p)$ and an $\varepsilon>0$ such that
\begin{equation}
\label{eq-LMI-PROB}
A_\sigma^\top P_{q_2}^{}A_\sigma^{} - \gamma_d^2\, P_{q_1}^{} \preceq -\varepsilon I
\end{equation}
for every transition $d=\MyTransition{q_1}{\sigma}{q_2}\in\delta$.~\hfill$\lrcorner$
\end{problem}

\eqref{eq-LMI-PROB} is a semi-definite program (SDP) but the constraints on the inertia of $P_q$ cannot be expressed as a semi-definite program (non-convex optimization).
However, we show below that the inertia constraints can be dropped in our case (this is essentially due to the forward and backward completeness of our automaton as we will see below).
The following result is instrumental: (In what follows, $\nu(P_q)$ denotes the number of negative eigenvalues of $P_q$.)

\begin{proposition}\label{pro-order-morphisme}
Let $(\{P_q\},\varepsilon)$ be an admissible solution of \eqref{eq-LMI-PROB} with $\varepsilon>0$.
If there is a transition $\MyTransition{q_1}{\sigma}{q_2}\in\delta$ for some $\sigma\in\Sigma$, then $\nu(P_{q_1})\leq\nu(P_{q_2})$.~\hfill$\lrcorner$
\end{proposition}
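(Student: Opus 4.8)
The plan is to reduce the statement to a variational characterization of $\nu(\cdot)$: for a symmetric matrix $M$, the integer $\nu(M)$ equals the maximal dimension of a linear subspace of $\RRb^n$ on which the quadratic form $x\mapsto x^\top M x$ is negative definite. Granting this, it suffices to exhibit a subspace of dimension $\nu(P_{q_1})$ on which $P_{q_2}$ is negative definite, since that forces $\nu(P_{q_2})\geq\nu(P_{q_1})$.

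First I would let $W\subseteq\RRb^n$ be the span of the eigenvectors of $P_{q_1}$ associated to its negative eigenvalues, so that $\dim W=\nu(P_{q_1})$ and $x^\top P_{q_1}x<0$ for every nonzero $x\in W$. Feeding such an $x$ into the admissible inequality \eqref{eq-LMI-PROB} for the transition $d=\MyTransition{q_1}{\sigma}{q_2}$, and testing it against $x$, gives
\begin{equation}
(A_\sigma x)^\top P_{q_2}(A_\sigma x) \leq \gamma_d^2\, x^\top P_{q_1} x - \varepsilon\lvert x\rvert^2 < 0,
\end{equation}
where I used $\gamma_d>0$, $x^\top P_{q_1}x<0$ and $\varepsilon>0$. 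In particular the left-hand side is strictly negative, which forces $A_\sigma x\neq0$; hence $A_\sigma$ is injective on $W$, and the image $A_\sigma W$ is again a subspace of dimension $\nu(P_{q_1})$.

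Then I would observe that, by injectivity, every nonzero $y\in A_\sigma W$ is of the form $y=A_\sigma x$ for a unique nonzero $x\in W$, so the displayed inequality shows $y^\top P_{q_2} y<0$ for all such $y$. Thus $P_{q_2}$ is negative definite on the $\nu(P_{q_1})$-dimensional subspace $A_\sigma W$, and the variational characterization immediately yields $\nu(P_{q_2})\geq\dim(A_\sigma W)=\nu(P_{q_1})$, which is the claim.

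The main obstacle is conceptual rather than computational: everything hinges on the fact that $\nu(M)$ is the largest dimension of a negative-definite subspace for $M$. I would either cite this as a standard consequence of Sylvester's law of inertia, or prove it in one line by a dimension-counting argument, namely that a negative-definite subspace of dimension exceeding $\nu(M)$ must intersect the $\bigl(n-\nu(M)\bigr)$-dimensional nonnegative eigenspace of $M$ nontrivially, contradicting negative definiteness there. The only other point to keep in mind is that the strictness in \eqref{eq-LMI-PROB} (the $-\varepsilon I$ term) is exactly what guarantees the strict inequality above, and thereby both the injectivity of $A_\sigma$ on $W$ and the negative definiteness, rather than mere negative semi-definiteness, on $A_\sigma W$.
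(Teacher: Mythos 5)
Your proof is correct and follows essentially the same route as the paper's: both push a $\nu(P_{q_1})$-dimensional subspace through $A_\sigma$, use the $-\varepsilon I$ term in \eqref{eq-LMI-PROB} to get strict negativity of $P_{q_2}$ on the image (which also gives injectivity, hence preservation of dimension), and conclude by the standard fact that the number of negative eigenvalues bounds from above the dimension of any negative-definite subspace (the paper cites \cite[Theorem~4.2.6]{bib-HorJoh90} for this, where you invoke Sylvester's law with a dimension count). The only cosmetic difference is that you start from the strictly negative eigenspace of $P_{q_1}$, whereas the paper only assumes a subspace on which $P_{q_1}$ is negative semidefinite; both work thanks to the $\varepsilon$ margin.
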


\begin{proof}
There exists a $\nu(P_{q_1})$-dimensional subspace $\calU$ such that $x^\top P_{q_1}x\leq0$ for all $x\in\calU$.
From \eqref{eq-LMI-PROB}, we have $y^\top P_{q_2}y<0$ for every $y=A_\sigma x$, $x\in\calU\saufzero$.
This implies that $P_{q_2}$ is negative definite on $A_\sigma\calU$ and also that $\dime\,A_\sigma\calU=\nu(P_{q_1})$ because $A_\sigma x\neq0$ for every $x\in\calU\saufzero$.
We conclude that $P_{q_2}$ has at least $\nu(P_{q_1})$ negative eigenvalues (see, e.g., \cite[Theorem~4.2.6]{bib-HorJoh90}).
\end{proof}

Proposition~\ref{pro-order-morphisme} actually shows that there is an order-preserving map between the states $q\in Q$ (with the partial order induced by the graph of $\ttAut$) and the number $\nu(P_q)$ of negative eigenvalues of $P_q$.
This turns out to be very useful for the resolution of Problem~\ref{pro-find-Pq}, as illustrated with  Example~\ref{exa-path-graph} below.
(General conclusions can be deduced from the example.
For reasons of space, we do not formalize this result.)

\begin{example}\label{exa-path-graph}
Consider the automaton depicted in Figure~\ref{fig-automaton-graph}.
Let $\{\gamma_d\}$ be a given set of rates, and assume that $(\{P_q\},\varepsilon)$ is an admissible solution of \eqref{eq-LMI-PROB} with $\varepsilon>0$.
Then from Proposition~\ref{pro-order-morphisme}, we have the following inequalities: $\nu(P_\tta)\leq\nu(P_\ttb)$, $\nu(P_\ttb)\leq\nu(P_\ttc)$, $\nu(P_\ttc)\leq\nu(P_\tta)$, $\nu(P_\tta)\leq\nu(P_\ttc)$, $\nu(P_\ttb)\leq\nu(P_\ttd)$, and $\nu(P_\ttd)\leq\nu(P_\tte)$.
Hence, $P_\tta$, $P_\ttb$ and $P_\ttc$ have the same number of negative eigenvalues, let's say $p_1$; and we have $p_1\!\leq\! p_2 \!\leq\! p_3$ where $p_2\!=\!\nu(P_\ttd)$ and $p_3\!=\!\nu(P_\tte)$.~\hfill$\lrcorner$
\end{example}

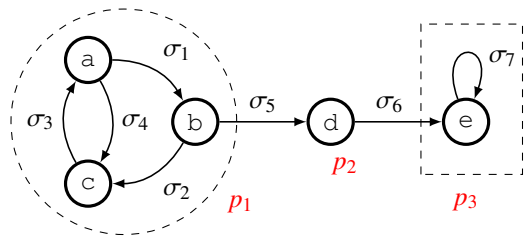
\begin{figure}[htbp]
\centering
\begin{tikzpicture}[baseline=(current bounding box.center)]
\tikzset{
  node distance=1.8cm,
  my arrow/.style={-latex,auto,semithick},
  my state/.style={draw=black,circle,inner sep=0pt,minimum size=6mm,very thick},
}

\pgfmathsetmacro{\rad}{0.95}
\node[my state] (B) at (0:\rad) {$\ttb$};
\node[my state] (A) at (120:\rad) {$\tta$};
\node[my state] (C) at (240:\rad) {$\ttc$};
\node[my state,right of=B] (D) {$\ttd$};
\node[my state,right of=D] (E) {$\tte$};
\path (A) edge[my arrow,bend left] node[auto] {$\sigma_1$} (B);
\path (B) edge[my arrow,bend left] node[auto] {$\sigma_2$} (C);
\path (C) edge[my arrow,bend left] node[auto] {$\sigma_3$} (A);
\path (A) edge[my arrow,bend left] node[auto] {$\sigma_4$} (C);
\path (B) edge[my arrow] node[pos=0.5] {$\sigma_5$} (D);
\path (D) edge[my arrow] node[pos=0.4] {$\sigma_6$} (E);
\path (E) edge[my arrow,loop above,looseness=10,out=110,in=70] node[pos=0.85,above right] {$\sigma_7$} (E);

\draw[dashed] (0,0) circle (1.5);
\coordinate (SW) at ($(E)+(-0.6,-0.7)$);
\coordinate (NE) at ($(E)+(0.75,1.3)$);
\draw[dashed] (SW) rectangle (NE);

\node[red] at (-35:1.9) {$p_1$};
\node[red,below right=3.5mm and -1mm of D.center] {$p_2$};
\node[red,below=5mm of E] {$p_3$};
\end{tikzpicture}
\vskip2pt
\caption{Automaton used in Example~\ref{exa-path-graph}.}
\label{fig-automaton-graph}
\end{figure}

Each set of nodes $q$ connected by a loop in $\ttAut$ must carry symmetric matrices $P_q$ with the \emph{same} $\nu(P_q)$.
Further results on the value of $\nu(P_q)$ can be obtained from the Lyapunov--Stein theorem below.
The key observation is that the value of $\nu(P_q)$ at each node $q\in Q$ is regulated by the selection of the rates $\gamma_d$, $d\in \delta$.

Looking at Example~\ref{exa-path-graph}, we wonder whether $\nu(P_q)$ can take different values,
and also how we can force $\nu(P_q)$ to be equal to $p$ for all $q\in Q$.
In the case of Example~\ref{exa-path-graph}, these questions are equivalent to: ``can $p_1,p_2,p_3$ have different values; and how can we ensure $p_1=p_2=p_3=p$?''
The answer relies on the following theorem \cite[Section~13.2]{bib-LanTis85TMWA}:

\begin{theorem}[Lyapunov--Stein]\label{thm-lyap}
Let $A\in\RRb^\ntn$.
Then there exists a symmetric matrix $P\in\RRb^\ntn$ satisfying
\[
A^\top PA - P \prec 0
\]
if and only if $A$ has no eigenvalues on the unit circle (in the complex plane).
Moreover, in this case, $P$ has inertia $(k,0,n-k)$, where $k$ is the number of eigenvalues of $A$ that are \emph{strictly outside} the unit circle.~\hfill$\lrcorner$
\end{theorem}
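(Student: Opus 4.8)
The plan is to prove the Lyapunov--Stein theorem in two stages: first the existence/non-existence dichotomy, then the inertia count. For the existence direction, the cleanest approach is to use the discrete-time Lyapunov operator $\calL_A(P) = A^\top PA - P$ acting on the space of symmetric matrices. If $A$ has no eigenvalues on the unit circle, I would argue that $\calL_A$ is invertible on symmetric matrices, so that for any negative definite $Q \prec 0$ there is a unique symmetric $P$ with $\calL_A(P) = Q$. The key computation is the spectral mapping: the eigenvalues of the operator $X \mapsto A^\top X A$ are exactly the products $\lambda_i \lambda_j$ of eigenvalues of $A$, so the eigenvalues of $\calL_A$ are $\lambda_i\lambda_j - 1$, which are all nonzero precisely when no $\lambda_i$ lies on the unit circle. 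For the converse, if $A$ has an eigenvalue $\lambda$ with $|\lambda| = 1$ and eigenvector $v$, then testing the inequality on $v$ (over $\CCb$, after Hermitian extension) gives $v^* A^\top P A v - v^* P v = (|\lambda|^2 - 1) v^* P v = 0$, contradicting strict negativity; some care is needed to pass to a real vector when $\lambda$ is complex, which I would handle by taking real and imaginary parts of $v$ and examining the induced $2$-dimensional invariant subspace.

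The inertia count is the more delicate part. Here I would block-diagonalize (over $\RRb$) according to the splitting $\RRb^n = E_{\mathrm{in}} \oplus E_{\mathrm{out}}$, where $E_{\mathrm{in}}$ is the sum of generalized eigenspaces for eigenvalues inside the unit circle and $E_{\mathrm{out}}$ for those outside; both are $A$-invariant since $A$ commutes with its spectral projections. On $E_{\mathrm{out}}$ the restriction $A_{\mathrm{out}}$ has all eigenvalues of modulus $> 1$, so $A_{\mathrm{out}}^{-1}$ is a contraction and one builds a positive definite solution of $A_{\mathrm{out}}^\top P_{\mathrm{out}} A_{\mathrm{out}} - P_{\mathrm{out}} \prec 0$ with $P_{\mathrm{out}} \prec 0$ (a standard reversed-time Lyapunov construction, e.g. summing $\sum_{k\ge 0} (A_{\mathrm{out}}^{-k})^\top A_{\mathrm{out}}^{-k}$), contributing $k$ negative eigenvalues; on $E_{\mathrm{in}}$ the restriction is a genuine contraction and the Lyapunov sum $\sum_{k \ge 0} (A_{\mathrm{in}}^k)^\top A_{\mathrm{in}}^k$ yields a positive definite solution, contributing $n - k$ positive eigenvalues. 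This exhibits \emph{one} solution with the claimed inertia.

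The remaining, and to my mind the main, obstacle is showing that \emph{every} solution has this same inertia, not just the one constructed. For this I would invoke an inertia-invariance argument: the set of symmetric solutions of the strict inequality $\calL_A(P) \prec 0$ is convex (in fact an open cone-like set), and inertia is locally constant on the set of nonsingular symmetric matrices. The crux is to verify that any solution $P$ of $A^\top PA - P \prec 0$ must be nonsingular, which follows because if $Pv = 0$ for some $v \neq 0$ then $v^\top(A^\top PA - P)v = v^\top A^\top P A v = (Av)^\top P (Av)$ would have to be strictly negative while simultaneously one derives a sign obstruction by iterating the inequality along the trajectory $v, Av, A^2 v, \dots$ and using invertibility of $A$. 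Granting nonsingularity and the connectedness of the solution set, inertia cannot jump, so it must match the explicit solution constructed above. I would expect this invariance step, together with the careful real-vs-complex bookkeeping in the converse direction, to consume most of the technical effort, whereas the spectral-mapping computation for invertibility is essentially routine.
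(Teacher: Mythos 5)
The paper does not actually prove this theorem: it is quoted as a classical result with a citation to Lancaster--Tismenetsky, Section~13.2, so your proposal can only be judged on its own merits. It has one repairable error and one genuine gap. The error is the spectral-mapping claim in your first paragraph: the eigenvalues of $X \mapsto A^\top X A - X$ on symmetric matrices are $\lambda_i\lambda_j - 1$, and these vanish whenever $\lambda_i\lambda_j = 1$, which can perfectly well happen with no eigenvalue on the unit circle. For $A = \mathrm{diag}(2,\,1/2)$ the operator annihilates $\left[\begin{smallmatrix} 0 & 1 \\ 1 & 0 \end{smallmatrix}\right]$, so it is singular even though the theorem's hypothesis holds; invertibility of the Stein operator requires the absence of \emph{reciprocal pairs}, not just of unimodular eigenvalues. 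This slip is survivable only because your second paragraph (the splitting $\RRb^n = E_{\mathrm{in}} \oplus E_{\mathrm{out}}$, with $P_{\mathrm{out}} = -\sum_{j\geq 0}(A_{\mathrm{out}}^{-j})^\top A_{\mathrm{out}}^{-j}$ and $P_{\mathrm{in}} = \sum_{j\geq 0}(A_{\mathrm{in}}^{j})^\top A_{\mathrm{in}}^{j}$, glued by congruence) independently establishes existence, with the correct inertia for that particular solution; that construction is sound.

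The genuine gap is the nonsingularity step on which your entire third paragraph rests. You invoke ``invertibility of $A$,'' but $A$ may be singular: an eigenvalue $0$ lies strictly inside the unit circle and is allowed by the hypothesis. Worse, forward iteration from a null vector $v$ of $P$ produces no sign obstruction at all: one gets $(Av)^\top P (Av) < 0$ and thereafter the sequence $(A^j v)^\top P (A^j v)$ simply keeps strictly decreasing, which is exactly the behavior of any vector with a component in the expanding subspace, so no contradiction can be reached along this route. The standard repair proves the inertia statement for \emph{every} solution directly, subsuming nonsingularity and making your convexity/connectedness machinery unnecessary: for $x \in E_{\mathrm{in}}\saufzero$ the sequence $(A^j x)^\top P (A^j x)$ is strictly decreasing (while $A^j x \neq 0$) with limit $0$, forcing $x^\top P x > 0$; for $x \in E_{\mathrm{out}}\saufzero$ the restriction of $A$ to $E_{\mathrm{out}}$ is invertible, $A^{-j}x \to 0$, and the Stein inequality makes $(A^{-j}x)^\top P (A^{-j}x)$ strictly increasing with limit $0$, forcing $x^\top P x < 0$. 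Hence $P$ is positive definite on an $(n-k)$-dimensional subspace and negative definite on a complementary $k$-dimensional subspace, so $\In(P) = (k,0,n-k)$ by a dimension count. Your necessity argument (testing the Hermitian extension of the inequality on an eigenvector of a unimodular eigenvalue) is correct as sketched.
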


We use Theorem~\ref{thm-lyap} to continue the analysis of Example~\ref{exa-path-graph}.
In what follows, $\Delta(A)$ denotes the triplet $(i_{\mathrm{out}},i_{\mathrm{on}},i_{\mathrm{in}})$, where $i_{\mathrm{out}}$ (resp.\ $i_{\mathrm{on}}$ and $i_{\mathrm{in}}$) is the number of eigenvalues of $A\in\RRb^\ntn$ that are \textit{strictly outside} (resp.\ \textit{on} and \textit{strictly inside}) the unit circle.
Let $\gamma_{d_7}>0$ be the rate associated to transition $d_7=\MyTransition{\tte}{\sigma_7}{\tte}$ in the automaton of Figure~\ref{fig-automaton-graph}.
If $(\{P_q\},\varepsilon)$ is an admissible solution of \eqref{eq-LMI-PROB} with $\varepsilon>0$, then $A_{\sigma_7}^\top P_\tte^{} A_{\sigma_7}^{} - \gamma_{d_7}^2\,P_\tte^{}\prec0$.
Hence, by Theorem~\ref{thm-lyap}, $\In(P_\tte)=\Delta(A_{\sigma_7}/\gamma_{d_7})$.
So there is only one possible value for $p_3$, and this value can be made equal to $p$ by choosing $\gamma_{d_7}$ such $\Delta(A_{\sigma_7}/\gamma_{d_7})=(p,0,n-p)$.

If we look at the closed loop $\tta\,\raisebox{-2pt}[0pt][0pt]{$\stackrel{\sigma_1}{\to}$}\,\ttb\,\raisebox{-2pt}[0pt][0pt]{$\stackrel{\sigma_2}{\to}$}\,\ttc\,\raisebox{-2pt}[0pt][0pt]{$\stackrel{\sigma_3}{\to}$}\,\tta$, we get the same conclusion: let $\gamma_{d_1},\gamma_{d_2},\gamma_{d_3}$ be the rates associated to the transitions $d_1=\MyTransition{\tta}{\sigma_1}{\ttb}$, $d_2=\MyTransition{\ttb}{\sigma_2}{\ttc}$ and $d_3=\MyTransition{\ttc}{\sigma_3}{\tte}$, and define the matrix $B=(A_{\sigma_3}A_{\sigma_2}A_{\sigma_1})/(\gamma_{d_3}\gamma_{d_2}\gamma_{d_1})$.
Then, since $B^\top P_\tta\,B - P_\tta \prec0$, we get that the inertia $\In(P_\tta)=\Delta(B)$.
Hence, $p_1$ is uniquely determined by the rates $\gamma_{d_1},\gamma_{d_2},\gamma_{d_3}$, and we have to choose them in such a way that $\Delta(B)=(p,0,n-p)$.

A similar reasoning holds for every closed loop in the graph of $\ttAut$.
Next to this, note that no closed loop constrains directly the value of $p_2$.
However, if we choose the rates $\gamma_d$ so that $p_1=p_3=p$ then, by $p_1\leq p_2\leq p_3$, 
we necessarily have $p_2=p$.

In conclusion, we have seen on a small example that the inertia of the matrices $P_q$ is constrained by the structure of the automaton.
Indeed, given the specific configuration rates $\gamma_d$, $d \in \delta$, if \eqref{eq-LMI-PROB} admits a solution with $\varepsilon>0$, then the inertia constraint is automatically satisfied.
This assumes that every node of the automaton is inside a loop (like $\tta$, $\ttb$, $\ttc$ and $\tte$ in Figure~\ref{fig-automaton-graph}), or is on a path between two nodes that are themselves inside loops (like $\ttd$ in Figure~\ref{fig-automaton-graph}).
This assumption is automatically satisfied if the automaton is path-complete for a non-empty language $\calL\subseteq\Sigma^\ZZb$.

The example below illustrates the algorithm on a simple application in biology:

\begin{example}\label{exa-path-no-common}
Consider a bacterial culture involving two bacteria.
$x_A(t)$ is the population of type $A$ bacteria and $x_B(t)$ the population of type $B$ bacteria at time $t$.
Let $x(t)$ be the vector $[x_A(t),x_B(t)]^\top$.
Three modes of action on the culture may arise:
\begin{itemize}
    \item \underline{Mode~1}: The input of a gene Ge transforms $90\%$ of type $B$ bacteria into type $A$ bacteria.
    \item \underline{Mode~2}: The removal of a gene Ge transforms $90\%$ of type $A$ bacteria into type $B$ bacteria.
    \item \underline{Mode~3}: After the transformation in Mode~1, we use a specialized antibiotics that kills bacteria not carrying Ge.
    For illustration, we assume that the antibiotics kills directly $50\%$ of $B$ and that the perturbation on the environment produces a proportional disruption on type $A$ bacteria.
    This action can be performed as many times as we want, before performing Mode~2 again.
\end{itemize}
For simplicity, we consider normalized populations, such that $x_A(t)+x_B(t)=1$ at each $t$.
Hence, the overall dynamics can be represented as the switching system \eqref{eq-sw-syst}  with the matrices
\[
A_1 = {\setMatrix\left[\begin{array}{cc} 1 & 0.9 \\ 0 & 0.1 \end{array}\right]} ,\;
A_2 = {\setMatrix\left[\begin{array}{cc} 0.1 & 0 \\ 0.9 & 1 \end{array}\right]} ,\;
A_3 = {\setMatrix\left[\begin{array}{cc} 1 & -0.5 \\ 0 & 0.5 \end{array}\right]} ,
\]
representing each mode respectively.
The allowed transitions of the system are the ones described by the following automaton (Mode~3 can be applied only if Mode~1 or Mode~3 was applied just before):
\[
\begin{tikzpicture}[baseline={([yshift=-4mm]current bounding box.center)}]
\tikzset{
  node distance=1.5cm,
  my arrow/.style={-latex,auto,semithick},
  my state/.style={draw=black,circle,inner sep=0pt,minimum size=6mm,very thick},
}
\node[my state,right=14mm of A] (A) {$\tta$};
\node[my state,right of=A] (B) {$\ttb$};
\path (A) edge[my arrow,bend left] node[auto] {$1$} (B);
\path (B) edge[my arrow,bend left] node[auto] {$2$} (A);
\path (A) edge[my arrow,loop above,looseness=10,out=110,in=70] node[pos=0.15,above left] {$2$} (A);
\path (B) edge[my arrow,loop above,looseness=10,out=110,in=70] node[pos=0.85,above right] {$1$} (B);
\path (B) edge[my arrow,loop above,looseness=10,out=20,in=-20] node[pos=0.35,above right=-1.5pt and -1.5pt] {$3$} (B);
\end{tikzpicture} \!\!.
\]
The system is path $1$-dominant, as certified by the feasibility of the CVX program \cite{bib-GraBoy08CVXM} below, based on the rates
\[
\begin{array}{ll}
d_1=\MyTransitionNum{a}{2}{a}:\, \gamma_{d_1}=3/4 , \hspace*{2mm} & d_3=\MyTransitionNum{b}{2}{a}:\, \gamma_{d_3}=1/4 , \\
d_2=\MyTransitionNum{a}{1}{b}:\, \gamma_{d_2}=1/4 , \hspace*{2mm} & d_4=\MyTransitionNum{b}{1}{b}:\, \gamma_{d_4}=3/4 , \\
& d_5=\MyTransitionNum{b}{3}{b}:\, \gamma_{d_5}=3/4 .
\end{array}
\]
\begin{lstlisting}
A1 = [1, 0.9; 0, 0.1];
A2 = [0.1, 0; 0.9, 1];
A3 = [1, -0.5; 0, 0.5];

cvx_begin sdp
variable Pa(2, 2) symmetric;
variable Pb(2, 2) symmetric;

A2' * Pa * A2 - 0.75^2 * Pa <= -0.01 * eye(2);
A1' * Pb * A1 - 0.25^2 * Pa <= -0.01 * eye(2);
A2' * Pa * A2 - 0.25^2 * Pb <= -0.01 * eye(2);
A1' * Pb * A1 - 0.75^2 * Pb <= -0.01 * eye(2);
A3' * Pb * A3 - 0.75^2 * Pb <= -0.01 * eye(2);
cvx_end
\end{lstlisting}

%Instead of solving a feasibility problem (like Problem~\ref{pro-find-Pq}, we formulate our search for cones as an optimization problem on $\varepsilon$.
%This acts in favor of a more important separation between $\mudo{d}=\gamma_d^2-\varepsilon r_*$ and $\muup{d}=\gamma_d^2+\varepsilon r^*$ (see Lemma~\ref{lem-bound}).
%However, this requires additional constraints on the $2$-norm of $P_\tta$ and $P_\ttb$ to ensure a bounded optimal solution.

The choice of the rates $\gamma_d$ are somehow arbitrary but constrained by the relation that $\gamma_{d_1}$ must lie between $\lvert\lambda_2(A_2)\rvert$ and $\lvert\lambda_1(A_2)\rvert$ where $\lambda_1(A_2)$, $\lambda_2(A_2)$ are the eigenvalues of $A_2$ ordered with decreasing magnitude (see discussion above this example).
Similarly, $\lvert\lambda_2(A_1)\rvert<\gamma_{d_4}<\lvert\lambda_1(A_1)\rvert$, and $\lvert\lambda_2(A_3)\rvert<\gamma_{d_5}<\lvert\lambda_1(A_3)\rvert$.
This reduces considerably the search space of the $\gamma_d$.

$1$-dominance of the system asserts the existence of a family of $1$-dimensional subspaces that will attract all the trajectories.
If the switching sequence is periodic, e.g., $\sigma(\cdot)=\{\ldots2,1,3,2,1,3,\ldots\}$, then this family of attractors is finite.
For normalized trajectories, this implies the existence of a globally attractive periodic steady state solution, as shown in Figure~\ref{fig-example-attractors} (right).
Likewise, for constant switching, e.g., $\sigma(\cdot)=\{\ldots2,2,\ldots\}$, normalized trajectories necessarily converge to a fixed point, as shown in Figure~\ref{fig-example-attractors} (left).~\hfill$\lrcorner$
\end{example}

\begin{figure}[h!]
\centering
\includegraphics[width=\linewidth]{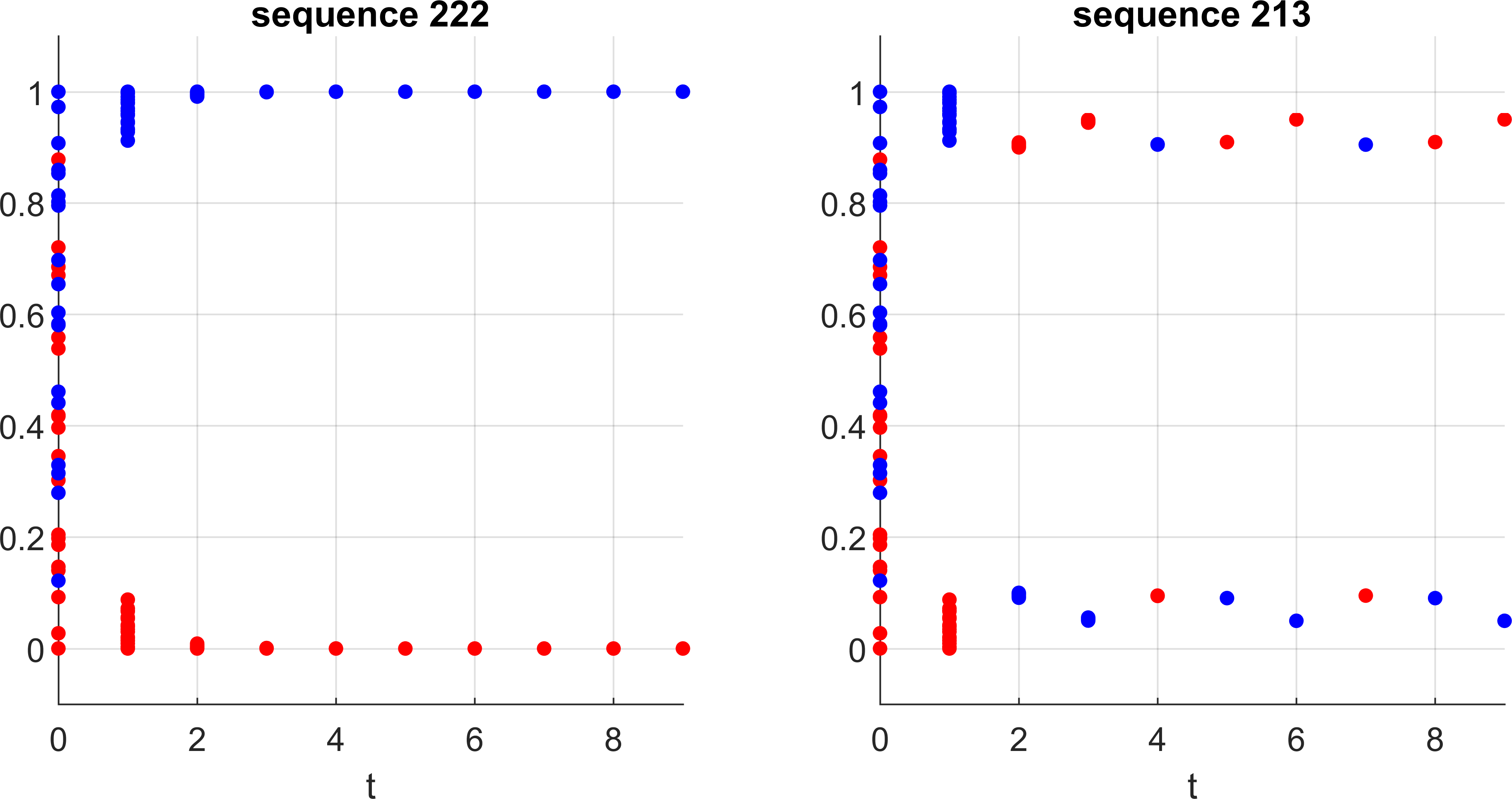}
\vskip-5pt
\caption{Trajectories in Example \ref{exa-path-no-common} from different initial conditions, with constant signal (left), and periodic signal (right).
Type $A$ and $B$ bacterial are in red and blue, respectively.
All trajectories quickly converge to the same orbit, as predicted by path $1$-dominance.}
\label{fig-example-attractors}
\end{figure}

%%%%%%%%%%%%%%%%%%%%%%%%%%%%%%%%%%%%%%%%%%%%%%%%%%%%%%%%%%%%%%%%%%%%%%
\section{CONCLUSIONS}
\label{sec-application-discussion}

In this work, we have introduced the concept of \emph{path-complete $p$-dominance} for switching linear systems, which generalizes the notion of dominant/slow modes for LTI systems.
Our approach is geometric, as it relies on the contraction property of a finite set of cones.
We have shown that path-dominant switching systems inherit some of the nice properties of LTI systems with dominant/slow modes, namely that their asymptotic behavior is low-dimensional.

The algorithmic decidability of positivity/dominance for switching linear systems and nonlinear systems has already been addressed in \cite{bib-ForJun17PCPS} and \cite{bib-ForSep17DDTD}.
In comparison to path positivity \cite{bib-ForJun17PCPS} and dominance for nonlinear systems \cite{bib-ForSep17DDTD}, the main novelty of our algorithm is to make use of \emph{several} cones instead of searching for a single cone uniformly contracted by the system.
This allows us to capture a larger class of systems presenting a $p$-dominant behavior, and is particularly well suited for the analysis of switching linear systems with \emph{constrained} language.

We developed dominance as a direct extension of stability theory to capture richer asymptotic behaviors.
The use of path-complete \emph{Lyapunov functions} has become standard in the stability analysis of (constrained) switching linear systems \cite{bib-JunAhm17CLIS}, \cite{bib-AngAth17PCGC}.
Our algorithm draws upon these techniques but replaces contracting ellipsoids (or more generally, contracting norms) with contracting quadratic cones.
With our approach, we can study convergence to $p$-dimensional (time-varying) subspaces through an algorithm based on linear matrix inequalities.

An important restriction throughout this paper is to analyze dominance by the use of \emph{quadratic} cones.
This limitation is the price to pay for tractability.
Standard LMI solvers can then be used to construct the set of cones.
Note that, in the case of path-complete Lyapunov functions, it is well known that algebraic liftings exist, which allow to alleviate this restriction \cite{bib-PhiEss16SDTS}.
In the future, we plan to investigate this direction for path-dominance.

Due to space limitation, Section~\ref{sec-algorithm} provides a brief presentation of the algorithm and a short discussion on the choice of the rates $\gamma_d$.
These aspects will be discussed in future works.
The paper provides a few examples of path-dominant systems but a good effort is required to show the potential of the approach in applications.
We believe that our technique is well suited to tackle crucial problems in modern control.
For example, path-complete techniques allow to naturally model distributed and networked systems.

\vspace{2mm}

\noindent\textbf{Acknowledgement.}
The authors would like to thank R.~Sepulchre for insightful discussions on the paper.

% \linespread{0.92}

%%%%%%%%%%%%%%%%%%%%%%%%%%%%%%%%%%%%%%%%%%%%%%%%%%%%%%%%%%%%%%%%%%%%%%


\begin{thebibliography}{99}
\bibitem{bib-Lue79IDST}%
D.~G.~Luenberger, ``Introduction to dynamic systems: theory, models, and applications.'' Vol.~1, \textit{Wiley New York}, 1979.

\bibitem{bib-Bus73HMPC}%
P.~J.~Bushell, ``Hilbert's metric and positive contraction mappings in a Banach space.'' \textit{Archive for Rational Mechanics and Analysis}, 52(4), 330--338, 1973.

\bibitem{bib-Van68SPMH}%
J.~S.~Vandergraft, ``Spectral properties of matrices which have invariant cones.'' \textit{SIAM Journal on Applied Mathematics}, 16(6), 1208--1222, 1968.

\bibitem{bib-ForSep17DTPD}%
F.~Forni, R.~Sepulchre, ``A dissipativity theorem for p-dominant systems.'' \textit{IEEE 56th Annual Conference on Decision and Control (CDC)}, 3467--3472, 2017.

\bibitem{bib-ForSep17DDTD}%
F.~Forni, R.~Sepulchre, ``Differential dissipativity theory for dominance analysis.'' \textit{arXiv preprint arXiv:1710.01721}, 2017, to appear in \textit{IEEE Transactions on Automatic Control}.

\bibitem{bib-AngAth17PCGC}%
D.~Angeli, N.~Athanasopoulos, R.~M.~Jungers, M.~Philippe, ``Path-complete graphs and common Lyapunov functions.'' \textit{In Proceedings of the 20th International Conference on Hybrid Systems: Computation and Control}, 81--90, 2017.

\bibitem{bib-JunAhm17CLIS}%
R.~M.~Jungers, A.~A.~Ahmadi, P.~A.~Parrilo, M.~Roozbehani, ``A characterization of Lyapunov inequalities for stability of switched systems.'' \textit{IEEE Transactions on Automatic Control}, 62(6), 3062--3067, 2017.

\bibitem{bib-ForJun17PCPS}%
F.~Forni, R.~M.~Jungers, R.~Sepulchre, ``Path-complete positivity of switching systems.'' \textit{IFAC-PapersOnLine}, 50(1), 4558--4563, 2017.

\bibitem{bib-BenNem01LMCO}%
A.~Ben-Tal, A.~Nemirovski, ``Lectures on modern convex optimization: analysis, algorithms, and engineering applications.'' \textit{Siam}, 2001.

\bibitem{bib-BoyVan04CO}%
S.~Boyd, L.~Vandenberghe, ``Convex optimization.'' \textit{Cambridge university press}, 2004.

\bibitem{bib-FarRin11PLST}%
L.~Farina, S.~Rinaldi, ``Positive linear systems: theory and applications.'' Vol.~50, \textit{John Wiley \& Sons}, 2011.

\bibitem{bib-BonAst11CODC}%
S.~Bonnabel, A.~Astolfi, R.~Sepulchre, ``Contraction and observer design on cones.'' In \textit{Proceedings of 50th IEEE Conference on Decision and Control and European Control Conference}, 7147--7151, 2011.

\bibitem{bib-BacAst08DPLO}%
J.~Back, A.~Astolfi, ``Design of positive linear observers for positive linear systems via coordinate transformations and positive realizations.'' \textit{SIAM Journal on Control and Optimization}, 47(1), 345--373, 2008.

\bibitem{bib-New04CFDH}
S.~Newhouse, ``Cone-fields, domination, and hyperbolicity.'' \textit{Modern dynamical systems and applications}, 419--432, 2004.

\bibitem{bib-Jun09JSRT}
R.~Jungers, ``The joint spectral radius: theory and applications.'' Vol.~385, \textit{Springer Science \& Business Media}, 2009

\bibitem{bib-HorJoh90}%
R.~A.~Horn, C.~R.~Johnson, ``Matrix analysis.'' \textit{Cambridge University Press}, 1990.

\bibitem{bib-LanTis85TMWA}%
P.~Lancaster, M.~Tismenetsky, ``The theory of matrices: with applications.'' \textit{Elsevier}, 1985.

\bibitem{bib-GraBoy08CVXM}%
M.~Grant, S.~Boyd, Y.~Ye, ``CVX: Matlab software for disciplined convex programming.'', 2008.

\bibitem{bib-PhiEss16SDTS}%
M.~Philippe, R.~Essick, G.~E.~Dullerud, R.~M.~Jungers, ``Stability of discrete-time switching systems with constrained switching sequences.'' \emph{Automatica}, 72, 242--250, 2016.
\end{thebibliography}
\end{document}